\newtheorem{theorem}{Theorem}[section]
\newtheorem{lemma}[theorem]{Lemma}
\newtheorem{proposition}[theorem]{Proposition}
\newtheorem{corollary}[theorem]{Corollary}
\theoremstyle{definition}
\newtheorem{definition}[theorem]{Definition}
\newtheorem{example}[theorem]{Example}
\theoremstyle{remark}
\newtheorem{remark}[theorem]{Remark}
\numberwithin{equation}{section}
\begin{document}
\setcounter{page}{1}

\title[Convex Optimization in Hilbert spaces]{On Certain Conditions for  Convex Optimization in Hilbert Spaces}

\author[N. B. Okelo]{N. B. Okelo}

\address{Department of Pure and Applied Mathematics\\ School of Mathematics and Actuarial Science\\ Jaramogi Oginga Odinga  University of Science and Technology\\ Box 210-40601, Bondo-Kenya.}
\email{bnyaare@yahoo.com}

\subjclass[2010]{Primary 46N10; Secondary 47N10.}

\keywords{ Optimization problem, Convex function, Hilbert space.}

\date{Received: xxxxxx; Revised: yyyyyy; Accepted: zzzzzz.
\newline \indent $^{*}$ Corresponding author}

\begin{abstract}
In this paper convex optimization techniques are employed for
 convex optimization problems in infinite dimensional Hilbert spaces. A first order optimality condition is given.
Let  $f : \mathbb{R}^{n}\rightarrow \mathbb{R}$ and let $x\in \mathbb{R}^{n}$ be a
local solution to the problem $\min_{x\in \mathbb{R}^{n}} f(x).$ Then
$f'(x,d)\geq 0$ for every direction $d\in \mathbb{R}^{n}$  for which $f'(x,d)$ exists. Moreover, Let  $f : \mathbb{R}^{n}\rightarrow \mathbb{R}$ be differentiable at  $x^{*}\in \mathbb{R}^{n}.$ If $x^{*}$ is a local minimum
of $f$, then $\nabla f(x^{*}) = 0.$ A simple application involving the Dirichlet problem is also given.
\end{abstract} \maketitle

\section{Introduction}
Studies on optimization has attracted the attention of many mathematicians and researchers
 over along period of time(see \cite{Boy}, \cite{Ber}, \cite{Eke1}, \cite{Eke2}, \cite{Gra} and the references there in).
  In this paper, we are concerned  with the classical results
  on optimization of convex functionals
in infinite-dimensional real Hilbert spaces. When working with infinite-dimensional
spaces, a basic difficulty is that, unlike the case in finite-dimension, being closed and
bounded does not imply that a set is compact\cite{Alb}. In reflexive Banach spaces, this problem
is mitigated by working in weak topologies and using the result that the closed
unit ball is weakly compact\cite{Via}. This in turn enables mimicking some of the same ideas in
finite-dimensional spaces when working on unconstrained optimization problems\cite{Dun}.
It is the goal of this paper to provide a concise coverage of the problem of minimization
of a convex function on a Hilbert space. The focus is on real Hilbert spaces, where
there is further structure that makes some of the arguments simpler. Namely, proving
that a closed and convex set is also weakly sequentially closed can be done with an elementary
argument, whereas to get the same result in a general Banach space we need
to invoke Mazurs Theorem\cite{Glo}. The ideas discussed in this brief note are of great utility
in theory of PDEs, where weak solutions of problems are sought in appropriate Sobolev
spaces\cite{Kur}. After a brief review of the requisite preliminaries, we develop the
main results. Though, the results in this note are
classical, we provide proofs of key theorems for a self contained presentation. A simple
application, regarding the Dirichlet problem, is provided  for the purposes
of illustration. Before moving further we recall an important point about notions of compactness
and sequential compactness in weak topologies. It is common knowledge that compactness
and sequential compactness are equivalent in metric spaces. The situation is not
obvious in the case of weak topology of an infinite-dimensional normed linear space\cite{Eke2}.

\section{Preliminaries}
\begin{definition}
A sequence $x_{n}$ in a Banach space $B$ is said to converge to $x\in B$ if $\lim_{n\rightarrow \infty}x_{n}=x.$  Also a sequence $x_{n}$ in a Hilbert space $H$ converges weakly to $x$ if,
$\lim_{n\rightarrow \infty} \langle x_{n}, u\rangle= \langle x, u\rangle, \;\forall u\in H.$
We use the notation $x_{n} \rightharpoonup x$ to mean that $x_{n}$ converges weakly to $x.$
\end{definition}

\begin{definition}
A  real valued function $f$ on a Banach space $B$ is lower semi-continuous
(LSC) if $f(x) \leq \lim\inf_{n\rightarrow \infty}f(x_{n})$
for all sequences $x_{n}$ in $B$ such that $ x_{n} \rightarrow x$ (strongly) and weakly sequentially lower semi-continuous (weakly sequentially LSC) if $ x_{n} \rightharpoonup x.$
\end{definition}

\begin{definition}
A non-empty set $W$ is said to be convex if for  all $ \beta \in [0, 1]$ and $\forall\; x, y \in W$
$\beta x + (1-\beta)y \in W.$
 Let $X$ be a metric space and $W\subseteq X$ a non-empty convex set. A function
$f :W\rightarrow \mathbb{R}$ is convex if for all $ \beta \in [0, 1]$ and $\forall\; x, y \in W$
$$f(\beta x + (1-\beta)y)\leq \beta f(x) + (1-\beta)f(y).$$
\end{definition}
\begin{remark}
 We note that the function $f$ in the above definition is called strictly convex if the above
inequality is strict for $x \neq y$ and $\beta \in (0, 1).$ A function $f$ is convex if and only if its epigraph, $epi(f)$, is convex whereby
$epi(f) := f(x, r) \in dom(f) \times \mathbb{R} : f(x) \leq r.$ An optimization problem is convex if both the objective function and feasible set are convex(see\cite{Eke2} for details).

\end{remark}

\begin{definition}  Let $\mathbb{R}^{n}$ be an $n$-dimensional real space and $W \subseteq \mathbb{R}^{n}$.
A point $x^{*}\in \mathbb{R}^{n}$ is called a \emph{global minimizer} of the optimization problem $\min_{ x\in W} f(x),$ if $x^{*}\in W$
 and $f(x^{*}) \leq f(x),$ for all $x \in W.$
\end{definition}

\begin{definition} Let $\mathbb{R}^{n}$ be an $n$-dimensional real space and $W \subseteq \mathbb{R}^{n}$.
A point $x^{*}\in \mathbb{R}^{n}$ is called a \emph{local minimizer} of the optimization problem $\min_{ x\in W} f(x),$ if there exists a neighbourhood $N$ of $x^{*}$ such that $x^{*}$ is a global minimizer of the problem $\min_{ x\in W\cap N} f(x).$ That is there exists $\varepsilon > 0 $ such that $f(x^{*}) \leq f(x),$ whenever $x^{*}\in W$ satisfies $\|x^{*}-x\|\leq \varepsilon .$

\end{definition}
\begin{remark}
Any local minimizer of a convex optimization problem is a
global minimizer\cite{Boy}.
\end{remark}

\begin{proposition}
Let $B$ be a Banach space and $f : B \rightarrow \mathcal{\mathbb{R}}.$ Then the following are conditions \cite{Ber}
equivalent.
(i). $f$ is (weakly sequentially) LSC.\\
(ii). $epi(f),$ is (weakly sequentially) closed.\\
\end{proposition}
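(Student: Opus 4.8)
The plan is to establish both implications (i)~$\Rightarrow$~(ii) and (ii)~$\Rightarrow$~(i), and to observe that the strong statement and the weak sequential statement can be proved by one and the same template: everywhere I would simply replace strong convergence $x_n \to x$ by weak convergence $x_n \rightharpoonup x$, and ``closed'' by ``sequentially closed.'' The one structural fact I would record at the outset is that convergence of a pair $(x_n, r_n)$ to $(x, r)$ in $B \times \mathbb{R}$ amounts to $x_n \to x$ (resp.\ $x_n \rightharpoonup x$) in $B$ together with $r_n \to r$ in $\mathbb{R}$; since $\mathbb{R}$ is finite dimensional there is no distinction between weak and strong convergence in the second coordinate, so the weak product convergence is exactly weak-in-$B$ plus usual convergence in $\mathbb{R}$. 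Throughout I use $epi(f) = \{(x,r) \in B \times \mathbb{R} : f(x) \leq r\}$.

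For (i)~$\Rightarrow$~(ii), assume $f$ is (weakly sequentially) LSC and take any sequence $(x_n, r_n) \in epi(f)$ with $(x_n, r_n) \to (x, r)$ (resp.\ weakly). Then $x_n \to x$ (resp.\ $x_n \rightharpoonup x$) and $r_n \to r$, while $f(x_n) \leq r_n$ for every $n$. Passing to the $\liminf$ and invoking lower semicontinuity gives $f(x) \leq \liminf_n f(x_n) \leq \liminf_n r_n = r$, so $(x, r) \in epi(f)$. Hence $epi(f)$ is (weakly sequentially) closed.

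For (ii)~$\Rightarrow$~(i), assume $epi(f)$ is (weakly sequentially) closed and suppose $x_n \to x$ (resp.\ $x_n \rightharpoonup x$). Put $L = \liminf_n f(x_n)$. The inequality $f(x) \leq L$ is automatic when $L = +\infty$, so the substantive case is $L < +\infty$, which I would handle by contradiction. If $f(x) > L$, choose $r$ with $L < r < f(x)$ and extract a subsequence $x_{n_k}$ along which $f(x_{n_k}) \to L$; then for all large $k$ we have $f(x_{n_k}) \leq r$, so $(x_{n_k}, r) \in epi(f)$, while $(x_{n_k}, r) \to (x, r)$ (resp.\ weakly). Sequential closedness of the epigraph forces $(x, r) \in epi(f)$, i.e.\ $f(x) \leq r$, contradicting $r < f(x)$. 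Therefore $f(x) \leq L$, and $f$ is (weakly sequentially) LSC.

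The inequality manipulations are routine; the step demanding the most care, which I expect to be the main obstacle, is the contradiction argument in (ii)~$\Rightarrow$~(i). There one must pick the threshold $r$ strictly between $L$ and $f(x)$, pass to a subsequence realizing the $\liminf$, and verify that fixing the second coordinate at the constant $r$ keeps the approximating pairs inside $epi(f)$ while $(x_{n_k}, r)$ still converges in the relevant (weak) product sense. In the weak case one must additionally confirm that weak sequential closedness is being applied to a genuinely weakly convergent sequence $(x_{n_k}, r)$ in $B \times \mathbb{R}$, which is precisely the product fact noted at the start.
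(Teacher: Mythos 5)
Your proof is correct and complete in both directions, including the careful handling of the $\liminf$ via a subsequence and the observation that weak convergence in $B \times \mathbb{R}$ is weak convergence in $B$ together with ordinary convergence in $\mathbb{R}$. The paper itself supplies no proof of this proposition --- it is stated with a citation to Bertsekas and left unproved --- and your argument is precisely the standard epigraph characterization of (weak sequential) lower semicontinuity that the cited source gives, so there is nothing in the paper to diverge from.
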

\begin{remark}
$f : B \rightarrow \mathcal{\mathbb{R}}$ is coercive if for all $x\in B,$ $\lim_{\|x\|\rightarrow\infty}f(x)=\infty.$

\end{remark}

\begin{proposition}\label{prop2}Let $H$ be an infinite dimensional real separable Hilbert space and let $W \subseteq H$ be a (strongly) closed and convex set. Then, $W$ is weakly sequentially closed.
\end{proposition}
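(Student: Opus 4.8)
The plan is to exploit the Hilbert space structure through the nearest-point (metric) projection onto a closed convex set, which is precisely the tool that makes the argument elementary and lets us bypass Mazur's theorem. First I would dispose of the trivial case $W=\emptyset$, which is vacuously weakly sequentially closed, and henceforth assume $W\neq\emptyset$. The goal is then the following: given any sequence $(x_{n})\subseteq W$ with $x_{n}\rightharpoonup x$ in $H$, show that $x\in W$.

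The key ingredient is the projection theorem for Hilbert spaces: since $W$ is nonempty, closed and convex, for the point $x$ there exists a unique $p\in W$ (the projection $p=P_{W}(x)$) realizing the distance $\inf_{w\in W}\|x-w\|$, and $p$ is characterized by the variational inequality
$$\langle x-p,\,w-p\rangle\leq 0\qquad\text{for all }w\in W.$$
For a self-contained note I would establish this as a lemma: existence and uniqueness of the minimizer follow by taking a minimizing sequence and using the parallelogram law to show it is Cauchy (this is where completeness of $H$ enters), while the variational inequality follows by perturbing $p$ towards an arbitrary $w\in W$ along the segment $p+t(w-p)\in W$ (using convexity) and examining the one-sided derivative of $t\mapsto\|x-(p+t(w-p))\|^{2}$ at $t=0^{+}$.

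With the variational inequality in hand, the conclusion is immediate. Applying it with $w=x_{n}\in W$ gives $\langle x-p,\,x_{n}-p\rangle\leq 0$ for every $n$. Since $x-p$ is a fixed vector, $u\mapsto\langle x-p,\,u\rangle$ is a continuous linear functional, so the weak convergence $x_{n}\rightharpoonup x$ yields $\langle x-p,\,x_{n}-p\rangle\to\langle x-p,\,x-p\rangle=\|x-p\|^{2}$. Passing to the limit preserves the inequality, whence $\|x-p\|^{2}\leq 0$, forcing $x=p\in W$. This proves that $W$ is weakly sequentially closed.

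I expect the only genuine obstacle to be the projection theorem itself; once its variational characterization is granted, the passage to the weak limit is a single line, since it merely tests the defining inequality against the inner product, which is continuous (hence weakly sequentially continuous) in each fixed slot. I would also remark that separability plays no role in this argument — the statement holds in any real Hilbert space — so the separability hypothesis may be dropped, although it is harmless to retain it.
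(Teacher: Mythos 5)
Your proposal is correct and follows essentially the same route as the paper: project the weak limit $x$ onto $W$, test the variational inequality $\langle x-\phi_{W}(x),\,y-\phi_{W}(x)\rangle\leq 0$ with $y=x_{n}$, and pass to the weak limit to conclude $\|x-\phi_{W}(x)\|^{2}\leq 0$, hence $x=\phi_{W}(x)\in W$. Your additional remarks (the empty-set case, the sketch of the projection theorem, and the observation that separability is not needed) are accurate but do not change the argument.
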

\begin{proof}
Let the sequence $ x_{n} \rightharpoonup x$ be in $W.$ It only suffices to show that $x\in W$ by showing that $x=\phi_{W}(x),$ where $\phi_{W}(x)$ is the projection of $x$ into the closed convex set $W$. Indeed, we know that the projection $\phi_{W}(x)$ satisfies the variational inequality,
$\langle x - \phi_{W}(x), y- \phi_{W}(x)\rangle \leq 0,$ for all $y \in W.$ \\
So,
\begin{equation}\label{eq1}
   \langle x - \phi_{W}(x), x_{n}- \phi_{W}(x)\rangle \leq 0,\; \forall \, n.
\end{equation}

But, $ x_{n} \rightharpoonup x$ be in $W$ so we have,
\begin{eqnarray*}
  \|x - \phi_{W}(x)\|^{2} &=& \langle x - \phi_{W}(x), x - \phi_{W}(x)\rangle \\
   &=& \lim_{n\rightarrow \infty} \langle x - \phi_{W}(x), x_{n} - \phi_{W}(x)\rangle
\end{eqnarray*}
Hence, by Equation \ref{eq1} we have $\|x - \phi_{W}(x)\|=0.$  That is, $x=\phi_{W}(x).$
\end{proof}

\begin{lemma}
Let $f : H \rightarrow \mathbb{R}$ be a LSC convex function. Then $f$ is weakly LSC.
\end{lemma}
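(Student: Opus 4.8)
The plan is to argue entirely through the epigraph, exploiting the two structural facts already in hand: that a convex function has a convex epigraph, and that lower semicontinuity of $f$, in either the strong or the weak sequential sense, is equivalent to the corresponding closedness of $\mathrm{epi}(f)$. First I would record that, since $f$ is convex, its epigraph $\mathrm{epi}(f) \subseteq H \times \mathbb{R}$ is a convex set, as noted in the remark characterizing convex functions via their epigraphs. Next, since $f$ is (strongly) LSC, the epigraph-characterization proposition gives that $\mathrm{epi}(f)$ is (strongly) closed in $H \times \mathbb{R}$.

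The key step is then to pass from strong closedness to weak sequential closedness by invoking Proposition \ref{prop2}. For this I would first observe that $H \times \mathbb{R}$, equipped with the inner product $\langle (x,r),(y,s)\rangle := \langle x,y\rangle + rs$, is itself an infinite dimensional real separable Hilbert space whenever $H$ is, so that Proposition \ref{prop2} applies verbatim to its subsets. Applying it to the closed convex set $\mathrm{epi}(f)$ yields that $\mathrm{epi}(f)$ is weakly sequentially closed in $H \times \mathbb{R}$.

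Finally, I would invoke the epigraph characterization in the weak sequential direction: since $\mathrm{epi}(f)$ is weakly sequentially closed, $f$ is weakly sequentially LSC, which is the desired conclusion. If one prefers an explicit argument in place of this last citation, one unwinds the definition directly: given $x_{n} \rightharpoonup x$ in $H$, pass to a subsequence along which $f(x_{n_{k}})$ converges to $\liminf_{n} f(x_{n}) =: L$; then $(x_{n_{k}}, f(x_{n_{k}})) \in \mathrm{epi}(f)$ and $(x_{n_{k}}, f(x_{n_{k}})) \rightharpoonup (x,L)$ in $H \times \mathbb{R}$, so weak sequential closedness forces $(x,L) \in \mathrm{epi}(f)$, that is $f(x) \leq L$.

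The main obstacle I anticipate is bookkeeping rather than depth: one must check that weak convergence in the product space $H \times \mathbb{R}$ decomposes as weak convergence of the first coordinate in $H$ together with ordinary convergence of the scalar coordinate, since it is precisely this correspondence that lets the weak sequential closedness of the epigraph be read back as a statement about weakly convergent sequences $x_{n} \rightharpoonup x$ in $H$. One should also confirm that the separability of $H$ needed to apply Proposition \ref{prop2} is inherited by $H \times \mathbb{R}$, which indeed it is.
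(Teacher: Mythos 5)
Your proposal is correct and follows essentially the same route as the paper: convexity of $f$ gives a convex epigraph, strong lower semicontinuity gives a strongly closed epigraph, Proposition \ref{prop2} upgrades this to weak sequential closedness, and the epigraph characterization converts back to weak sequential lower semicontinuity of $f$. The only difference is that you carefully verify the points the paper silently glosses over, namely that Proposition \ref{prop2} must be applied in the product Hilbert space $H \times \mathbb{R}$ and that weak convergence there splits into weak convergence in $H$ plus ordinary convergence of the scalar coordinate.
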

\begin{proof} We know that  $f$ is convex iff $epi(f)$ is convex. Moreover, $epi(f)$
is strongly closed  because $f$ is (strongly) LSC. By Proposition \ref{prop2}  we have that $epi(f)$ is weakly
sequentially closed  implying that $f$ is weakly sequentially LSC.

\end{proof}

\section{Main Results}
\begin{theorem}
\label{thm1}Let $H$ be an infinite dimensional real separable Hilbert space and $W \subseteq H$ be a weakly sequentially closed and bounded set. Let $f : W \rightarrow \mathbb{R}$ be weakly sequentially LSC. Then $f$ is bounded from below and has a
minimizer on $W$.
\end{theorem}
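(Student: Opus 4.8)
The plan is to run the direct method of the calculus of variations. First I would set $m := \inf_{x \in W} f(x)$, where a priori $m \in [-\infty, \infty)$, and choose a minimizing sequence $\{x_{n}\} \subseteq W$ with $f(x_{n}) \to m$. The whole argument hinges on extracting from $\{x_n\}$ a weak limit that remains in $W$ and does not increase the value of $f$; if I can do this, finiteness of $m$ and attainment of the minimum both follow at once.

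The key structural input is weak sequential compactness of bounded sets, and this is the step I expect to be the main obstacle, since it is precisely where the infinite-dimensional difficulty noted in the introduction enters. Because $W$ is bounded we have $\sup_{n}\|x_{n}\| < \infty$, and because $H$ is a separable real Hilbert space it is reflexive, so a bounded sequence admits a weakly convergent subsequence. Concretely, one fixes an orthonormal basis $\{e_{j}\}$ of $H$; for each $j$ the scalar sequence $\langle x_{n}, e_{j}\rangle$ is bounded, so a diagonal extraction produces a subsequence $\{x_{n_{k}}\}$ along which $\langle x_{n_{k}}, e_{j}\rangle$ converges for every $j$. The limits define a bounded linear functional, and the Riesz representation theorem then supplies $x^{*} \in H$ with $x_{n_{k}} \rightharpoonup x^{*}$.

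With the weak limit in hand the remaining steps are routine. Since $W$ is weakly sequentially closed and each $x_{n_{k}} \in W$ with $x_{n_{k}} \rightharpoonup x^{*}$, I conclude $x^{*} \in W$. Finally, weak sequential lower semicontinuity of $f$ yields
\[
f(x^{*}) \leq \liminf_{k\to\infty} f(x_{n_{k}}) = \lim_{n\to\infty} f(x_{n}) = m .
\]
By definition of the infimum, $m \leq f(x^{*})$, so equality holds and $f(x^{*}) = m$. In particular $m$ cannot be $-\infty$, because $f$ is real-valued at $x^{*}$; hence $f$ is bounded below on $W$, and $x^{*}$ is the desired minimizer. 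The only place requiring care beyond the compactness step is the passage $\liminf_{k} f(x_{n_{k}}) = m$, which is immediate from $f(x_{n}) \to m$ since every subsequence of a convergent sequence shares its limit.
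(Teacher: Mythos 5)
Your proposal is correct and follows essentially the same direct-method argument as the paper: extract a weakly convergent subsequence from a minimizing sequence using boundedness of $W$, keep the weak limit in $W$ by weak sequential closedness, and conclude with weak sequential lower semicontinuity. The only differences are cosmetic --- you fold the paper's separate ``bounded from below'' step (which it proves by a contradiction with $f(x_n) < -n$) into the attainment argument, since finiteness of the infimum falls out of $f(x^{*}) \leq m$ with $f$ real-valued, and you spell out the diagonal-extraction proof of weak compactness of bounded sequences that the paper delegates to a citation.
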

\begin{proof}
The proof has two steps:\\ (i).  $f$ is bounded below. \\ (ii). There exists a minimizer in $W.$
\vskip 5mm
\emph{Step(i)}: Suppose  that $f$ is not bounded from below. Then there exist a sequence $x_{n} \in W$ such that $f(x_{n}) < -n$ for all
$n.$ But  $W$ is bounded so $x_{n}$ has a weakly convergent subsequence $x_{n_{i}}$
Furthermore, $W$ is weakly sequentially closed therefore $x\in W$. Then, since f is weakly
sequentially LSC we have $f(x) \leq \lim\inf_{n\rightarrow \infty}f(x_{n_{i}})=-\infty$ which is a contradiction. Hence,
$f$ is bounded from below.\\
\emph{Step(ii)}: Let  $x_{n} \in W$ be a minimizing sequence
for $f$ that is $f(x_{n}) \rightarrow \inf_{W}f(x).$ Let $\lambda := \inf_{W}f(x).$ Since $W$ is bounded and weakly
sequentially closed, it follows by \cite{Gra} that $x_{n}$ has a weakly convergent subsequence has a weakly convergent subsequence $x_{n_{i}}\in W$. But $f$ is weakly sequentially LSC  so we have
$$\lambda\leq f(x^{*}) \leq \lim \inf f(x_{n_{i}} ) = \lim f(x_{n_{i}} ) = \lambda$$
So, $f(x^{*}) = \lambda$
\end{proof}

\begin{corollary}
Let $H$ be an infinite dimensional real separable Hilbert space and $W \subseteq H$ be a weakly sequentially closed and bounded set. Let $f : W \rightarrow \mathbb{R}^{n}$ be non-empty and closed, and that $f : W \rightarrow \mathbb{R}^{n}$ is LSC and coercive.
Then the optimization problem $\inf_{x\in W}f(x)$ admits at least one global minimizer.
\end{corollary}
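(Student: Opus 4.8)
The plan is to reduce the (possibly unbounded) problem to the bounded setting covered by Theorem~\ref{thm1}, using coercivity to confine the search to a bounded sublevel set. First I would fix any point $x_{0}\in W$ (possible since $W$ is non-empty) and set $c := f(x_{0})$. I would then introduce the sublevel set $W_{c} := \{x\in W : f(x)\leq c\}$, which is non-empty because it contains $x_{0}$. The decisive elementary observation is that any global minimizer of $f$ over $W_{c}$ is automatically a global minimizer of $f$ over all of $W$: every point $x\in W\setminus W_{c}$ satisfies $f(x) > c \geq \inf_{W_{c}}f$, so no point outside $W_{c}$ can beat the minimum attained inside it. Thus it suffices to produce a minimizer on $W_{c}$.

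Next I would verify that $W_{c}$ meets the two hypotheses of Theorem~\ref{thm1}. Boundedness is exactly where coercivity is used: since $\lim_{\|x\|\rightarrow\infty}f(x)=\infty$, there exists $R>0$ with $f(x)>c$ whenever $\|x\|>R$, whence $W_{c}$ is contained in the ball of radius $R$ and is bounded. For weak sequential closedness I would write $W_{c}=W\cap L_{c}$, where $L_{c}:=\{x\in H : f(x)\leq c\}$ is the sublevel set of $f$ in $H$. By the preceding Lemma the (strongly) LSC convex function $f$ is weakly sequentially LSC, and the sublevel set of a weakly sequentially LSC function is weakly sequentially closed; intersecting with the weakly sequentially closed set $W$ preserves this property, so $W_{c}$ is weakly sequentially closed.

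With $W_{c}$ weakly sequentially closed and bounded and $f$ weakly sequentially LSC on $W_{c}$, I would invoke Theorem~\ref{thm1} directly to obtain a point $x^{*}\in W_{c}$ with $f(x^{*})=\inf_{W_{c}}f$. By the reduction in the first paragraph, this $x^{*}$ is a global minimizer of $f$ over $W$, which is the desired conclusion.

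The hard part will be the weak sequential closedness of the sublevel set $L_{c}$, and this is precisely where the convexity hypothesis (implicit in the paper's setting) is indispensable: the passage from strong lower semicontinuity to \emph{weak} sequential lower semicontinuity relies on the Lemma, which in turn rests on Proposition~\ref{prop2}. In a general infinite-dimensional Hilbert space one cannot upgrade strong LSC to weak sequential LSC without convexity, so this step — rather than the coercivity bound, which is routine — is the genuine crux of the argument.
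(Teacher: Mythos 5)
Your argument is essentially correct and is far more substantive than what the paper offers: the paper's entire ``proof'' is the single sentence that, by \cite{Boy} and ``an analogy to the proof of Theorem \ref{thm1}, the proof of coercivity is sufficient,'' with no construction of a bounded set and no verification of any hypothesis. You supply the standard and correct reduction: fix $x_{0}\in W$, pass to the sublevel set $W_{c}=\{x\in W: f(x)\leq c\}$ with $c=f(x_{0})$, note that a minimizer on $W_{c}$ minimizes on all of $W$, use coercivity to bound $W_{c}$, and then invoke Theorem \ref{thm1}. This is presumably the argument the paper intends to gesture at, and it is the right one, especially under the sensible reading of the (garbled) hypotheses in which $W$ is not assumed bounded a priori --- note that as literally stated the corollary already assumes $W$ bounded, which would make coercivity redundant and Theorem \ref{thm1} directly applicable. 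Two points to tighten. First, since $f$ is only defined on $W$, you should define the sublevel set intrinsically as $W_{c}=\{x\in W: f(x)\leq c\}$ and prove weak sequential closedness directly: if $x_{n}\in W_{c}$ and $x_{n}\rightharpoonup x$, then $x\in W$ because $W$ is weakly sequentially closed, and $f(x)\leq\liminf f(x_{n})\leq c$ by weak sequential lower semicontinuity; introducing a set $L_{c}\subseteq H$ is not meaningful here. Second, and more importantly, you are right that the step from (strong) LSC to weak sequential LSC is the crux, and it genuinely fails without convexity; since the corollary's hypotheses list only ``LSC and coercive,'' the statement as written does not actually follow, and your proof is a proof of the corrected statement in which $f$ is additionally assumed convex (so that the Lemma applies) or is assumed weakly sequentially LSC outright. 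You flag this correctly, but in a final write-up you should state the added hypothesis explicitly rather than calling it implicit.
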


\begin{proof}
By \cite{Boy} with an analogy to the proof of Theorem \ref{thm1} the proof of coercivity is sufficient.
\end{proof}
\begin{theorem}
A function that is strictly convex on $W$ has a unique minimizer on W$.$

\end{theorem}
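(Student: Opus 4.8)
The plan is to split the assertion into an existence part and a uniqueness part. Existence is already delivered by the machinery developed above, so the substantive content is the uniqueness claim, which I would obtain directly from the defining strict inequality of strict convexity by a convex-combination argument.

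First I would secure existence. A strictly convex function is in particular convex, so under the standing hypotheses of Theorem \ref{thm1} --- namely that $W$ is bounded and weakly sequentially closed and that $f$ is (strongly) LSC --- the preceding Lemma shows that $f$ is weakly sequentially LSC, and Theorem \ref{thm1} then guarantees that $f$ attains its infimum at some point $x^{*}\in W$. Write $\lambda := \inf_{x\in W} f(x) = f(x^{*})$ for this minimum value.

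The uniqueness step I would argue by contradiction. Suppose $x_{1},x_{2}\in W$ were two distinct minimizers, so that $f(x_{1})=f(x_{2})=\lambda$ with $x_{1}\neq x_{2}$. Since $W$ is convex, the midpoint $z:=\tfrac{1}{2}x_{1}+\tfrac{1}{2}x_{2}$ lies in $W$, and applying the definition of strict convexity with $\beta=\tfrac{1}{2}$ gives
\[
f(z) < \tfrac{1}{2}f(x_{1}) + \tfrac{1}{2}f(x_{2}) = \lambda .
\]
This contradicts the minimality of $\lambda$ on $W$, forcing $x_{1}=x_{2}$; hence the minimizer is unique.

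I expect no real obstacle in the uniqueness argument itself, which uses nothing beyond the strict inequality and the convexity of $W$. The only place demanding attention is the existence half: strict convexity by itself yields neither lower semicontinuity nor boundedness of $W$, so one must keep the hypotheses of Theorem \ref{thm1} in force in order to inherit existence. Once those are granted, the two halves combine to give exactly one minimizer on $W$.
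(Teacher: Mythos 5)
Your proposal is correct, and it is both more complete and cleaner than the paper's own argument. The paper does not address existence at all: its proof is purely a uniqueness argument, supposing two distinct \emph{local} minima $x,y$ and using strict convexity along the secant $\beta x+(1-\beta)y$ with $\beta$ taken close to $0$ to show that $x$ cannot be a local minimizer. (That argument also contains a garbled step --- the displayed inequality $\beta f(x)+(1-\beta)f(y)<f(y)$ is false when $f(x)\geq f(y)$; what is actually needed is $f(\beta x+(1-\beta)y)<\beta f(x)+(1-\beta)f(y)\leq f(x)$.) You instead work with \emph{global} minimizers and evaluate $f$ at the midpoint, getting $f(z)<\lambda$ immediately, which is the standard and tighter version of the same secant-line idea; the paper's local-minimum formulation is slightly stronger in that it rules out even two distinct local minima, but for the theorem as stated your version suffices. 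Your decision to flag that existence does not follow from strict convexity alone, and must be imported from Theorem \ref{thm1} together with the Lemma on weak sequential lower semicontinuity, is a genuine improvement: as stated, the theorem is false without such hypotheses (e.g., $t\mapsto e^{t}$ on $\mathbb{R}$, or a strictly convex function on a non-closed $W$, has no minimizer), and the paper silently ignores this. The only caveat is that $W$ must also be assumed convex for either uniqueness argument to run --- both you and the paper use that the segment joining the two candidate minimizers lies in $W$ --- so that hypothesis should be stated explicitly as well.
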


\begin{proof}
Assume the contrary, that f $(x)$ is convex yet there are two points $x,  y \in W$ such that $f (x)$ and $f (y)$
are local minima. Because of the convexity of $W$ every point on the secant line $\beta x + (1 - \beta)y$ is in $W.$ Without
loss of generality suppose $f (x) \geq f (y)$ if this is not the case, simply relabel the points. We then have
$\beta f (x) + (1 - \beta) f (y) < f (y),\forall\;  \beta\in (0, 1).$
But $f$ is  strictly convex, we also have $f (\beta x + (1 -\beta)y) < f (x),\forall\;  \beta\in (0, 1).$
Taking $\beta$ arbitrarily close to $0 $ along the secant line, $z = \beta x + (1 -\beta)y$ remains in $W $(since $W$ is convex)
and $f (z)$ remains strictly below $f (x)$ (because $f$ is strictly convex). Therefore, there is no open ball $B$  containing $x$ such
that $f (x) < f (z),\forall\; z (B \cap W)\setminus x.$ Therefore, $x$ is not a local minimizer, which is a contradiction.
\end{proof}

\noindent In this last part we give an optimality conditions. We give the first order condition for optimality here.
Consider the function $\psi : \mathbb{R} \rightarrow \mathbb{R}$ given by
$\psi(t) = f(x + td)$ for some choice of $x$ and $d$ in $ \mathbb{R}^{n}$. The key variational object in this context is the directional
derivative of $f$ at a point $x$ in the direction $d$ given by $$f'(x,d)=\lim_{t \downarrow 0}\frac{f(x + td) -f(x)}{t}.$$
When $f$ is differentiable at the point $ x\in\mathbb{R}^{n}$, then
$f'(x, d) = \nabla f(x)^{T} d = \psi'(0).$ The next two results give us  an optimality condition.

\begin{proposition}\label{PROP}
Let  $f : \mathbb{R}^{n}\rightarrow \mathbb{R}$ and let $x\in \mathbb{R}^{n}$ be a
local solution to the problem $\min_{x\in \mathbb{R}^{n}} f(x).$ Then
$f'(x,d)\geq 0$ for every direction $d\in \mathbb{R}^{n}$  for which $f'(x,d)$ exists.
\end{proposition}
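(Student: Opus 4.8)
The plan is to argue directly from the two definitions already in play: the definition of a local minimizer (Definition following Definition of global minimizer) and the definition of the directional derivative $f'(x,d)$ given just before the statement. Fix a direction $d\in\mathbb{R}^{n}$ for which $f'(x,d)$ exists. Because $x$ is a local minimizer, there is some $\varepsilon>0$ such that $f(x)\le f(y)$ for every $y$ with $\|x-y\|\le\varepsilon$. The first step I would carry out is to restrict attention to the ray emanating from $x$ in the direction $d$: for $t>0$ the point $y=x+td$ satisfies $\|x-y\|=t\|d\|$, so as long as $t$ is small enough that $t\|d\|\le\varepsilon$, the point $x+td$ lies in the $\varepsilon$-neighborhood of $x$ and the local minimality gives $f(x+td)\ge f(x)$.

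The second step is to convert this into a statement about the difference quotient. Since $t>0$, dividing the inequality $f(x+td)-f(x)\ge0$ by $t$ preserves its direction, so
$$\frac{f(x+td)-f(x)}{t}\ge 0$$
for all sufficiently small $t>0$. The final step is to pass to the limit $t\downarrow 0$. By hypothesis this limit exists and equals $f'(x,d)$; since a limit of a quantity that is eventually nonnegative is itself nonnegative (weak inequalities are preserved under limits), we conclude $f'(x,d)\ge 0$, which is exactly the claim.

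There is no substantive obstacle here: the only point requiring a moment of care is that the passage to the limit yields a \emph{non-strict} inequality even though each difference quotient is $\ge 0$ rather than $>0$, which is fine because we only assert $f'(x,d)\ge 0$. I would also be mindful to invoke that $t$ ranges over positive values only, matching the one-sided limit $t\downarrow 0$ in the definition of $f'(x,d)$, so that the sign of $t$ in the denominator is controlled and the inequality is not reversed.
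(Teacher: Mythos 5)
Your proof is correct and complete: restricting to the ray $x+td$, using local minimality for small $t>0$, dividing by the positive quantity $t$, and passing to the one-sided limit is exactly the standard argument, and you correctly note the only delicate points (the sign of $t$ and the fact that limits preserve only non-strict inequalities). For comparison, the paper actually states Proposition \ref{PROP} without any proof at all and immediately uses it to derive Theorem \ref{thmL}, so your argument supplies the missing justification rather than duplicating or diverging from one in the text.
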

\begin{theorem}\label{thmL}
Let  $f : \mathbb{R}^{n}\rightarrow \mathbb{R}$ be differentiable at  $x^{*}\in \mathbb{R}^{n}.$ If $x^{*}$ is a local minimum
of $f$, then $\nabla f(x^{*}) = 0.$
\end{theorem}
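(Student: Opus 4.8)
The plan is to reduce the statement to Proposition \ref{PROP} and then exploit the linearity in $d$ that differentiability supplies. Since $x^{*}$ is a local minimum, it is in particular a local solution to $\min_{x \in \mathbb{R}^{n}} f(x)$, so Proposition \ref{PROP} applies directly and gives $f'(x^{*}, d) \geq 0$ for every direction $d \in \mathbb{R}^{n}$ for which $f'(x^{*}, d)$ exists.

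The first step is to observe that differentiability of $f$ at $x^{*}$ guarantees that $f'(x^{*}, d)$ exists for \emph{every} $d \in \mathbb{R}^{n}$, and that it is given by the linear expression $f'(x^{*}, d) = \nabla f(x^{*})^{T} d$, exactly as recorded in the discussion preceding the statement. Consequently the conclusion of Proposition \ref{PROP} upgrades from a restricted family of directions to the unconditional inequality $\nabla f(x^{*})^{T} d \geq 0$ for all $d \in \mathbb{R}^{n}$.

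The key step is then an antisymmetry argument. Applying the inequality to both $d$ and $-d$ yields $\nabla f(x^{*})^{T} d \geq 0$ and $\nabla f(x^{*})^{T} d \leq 0$ simultaneously, which forces $\nabla f(x^{*})^{T} d = 0$ for every $d$. Taking $d = \nabla f(x^{*})$ (or, equivalently, letting $d$ range over the standard basis vectors $e_{1}, \dots, e_{n}$) then gives $\|\nabla f(x^{*})\|^{2} = 0$, whence $\nabla f(x^{*}) = 0$.

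I do not anticipate a genuine obstacle: the entire content is that differentiability makes the one-sided directional derivative linear in $d$, so the $d \mapsto -d$ symmetry converts the one-sided inequality of Proposition \ref{PROP} into a two-sided equality. The only point requiring care is to invoke the full-direction existence of $f'(x^{*}, d)$ before applying the proposition, since that proposition's conclusion is a priori restricted to directions in which the derivative is known to exist.
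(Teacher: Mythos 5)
Your proposal is correct and follows essentially the same route as the paper: invoke Proposition \ref{PROP}, use differentiability to write $f'(x^{*},d)=\nabla f(x^{*})^{T}d$ for all $d$, and then choose a direction that forces $\|\nabla f(x^{*})\|^{2}=0$. The only cosmetic difference is that the paper plugs in $d=-\nabla f(x^{*})$ directly, whereas you first derive $\nabla f(x^{*})^{T}d=0$ for all $d$ via the $d\mapsto -d$ symmetry; both are the same argument.
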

\begin{proof}
We know that every differentiable function is continuous so by Proposition \ref{PROP} we have we have $$0 \leq f'(x^{*}, d) = \nabla f(x^{*})^{T} d,$$ for all $d\in \mathbb{R}^{n}$.
Taking $d = -\nabla f(x^{*})$ we obtain
$0 \leq - \nabla f(x^{*})^{T} \nabla f(x^{*}) = - \|\nabla f(x^{*})\|^{2}\leq 0.$
Therefore,  $\nabla f(x^{*}) = 0.$
\end{proof}

\begin{example}
Consider the Dirichlet problem: $-\triangle u=f,$  in $W$ and $ u=0,$  on $\partial W,$ where $W\subset \mathbb{R}^{n}$ is a bounded domain, and $f \in  L^{2}(W).$ It is well known that this problem has a weak solution weak
which is convex and continuous, and coercive. Thus, the existence of a unique
minimizer is ensured by application of Theorem \ref{thmL}.
\end{example}
\section{Conclusion}
With regard to Portfolio Optimization, this study is geared towards applications to particularly Stochastic optimization with consideration  to: Cox-Ross-Rubinstein model and Hamilton-Jacobi-Bellman Equation\cite{Ber}.

{\bf Acknowledgement.} The author is thankful to NRF, Kenya for the financial support no NRF/JOOUST/2016/2017-001 towards this research.

\bibliographystyle{amsplain}

\end{document}